\newcommand*{\TextWidth}{4.2em}
\newcommand*{\myheight}{1.7em}
\newcommand{\Z}{{\mathbb Z}}
\newcommand{\N}{{\mathbb N}}  
\newcommand{\R}{{\mathbb R}}
\newcommand{\pz}{{\mathbb P}}
\newtheorem{theorem}{Theorem}
\newtheorem*{theoremB}{Theorem B}
\newtheorem{proposition}{Proposition}
\newtheorem{lemma}{Lemma}
\newtheorem*{lemmaDW1}{Lemma DWa}
\newtheorem*{lemmaDW2}{Lemma DWb}
\theoremstyle{definition}
\begin{document}

\pagenumbering{arabic}										
\setcounter{page}{1}										

\begin{center}
{\LARGE\bfseries Complete Subgraphs of the Coprime Hypergraph of Integers III: Construction}\\
\vspace{5mm}
{\large Jan-Hendrik de Wiljes}
\end{center}
\vspace{5mm}
{\bfseries Abstract.} The coprime hypergraph of integers on $n$ vertices $CHI_k(n)$ is defined via vertex set $\{1,2,\dots,n\}$ and hyperedge set $\{\{v_1,v_2,\dots,v_{k+1}\}\subseteq\{1,2,\dots,n\}:\gcd(v_1,v_2,\dots,v_{k+1})=1\}$. In this article we present ideas on how to construct maximal subgraphs in $CHI_k(n)$. This continues the author's earlier work, which dealt with bounds on the size and structural properties of these subgraphs. We succeed in the cases $k\in\{1,2,3\}$ and give promising ideas for $k\geq 4$.\\
\vspace{1mm}
{\bfseries Keywords.} Hypergraphs on integers, clique, matching\\
\vspace{1mm}
{\bfseries 2010 Mathematics Subject Classification.} 11B75, 05C65, 05C69, 05C70

\section{Introduction}
In this third part of a series of three articles we continue the work in \cite{dewiljes:2017a,dewiljes:2017b}. References and notation can be found there. The object of interest is the uniform \textit{coprime hypergraph of integers} $CHI_k$, which has vertex set $\Z$ and a $(k+1)$-hyperedge exactly between every $k+1$ elements of $\Z$ which have (not necessarily pairwise) greatest common divisor equal to $1$. In particular we are interested in the subgraph $CHI_k(n)$ of $CHI_k$ which is induced by the vertex set $[n]:=\N\cap[1,n]$.

Our main focus lies on the vertex subsets of $CHI_k(n)$ which induce complete subgraphs:
\[
CS_k(n):=\{A\subseteq[n]:o_p(A)\leq k\text{ for all }p\in\pz\},
\]
where $o_p(A):=|A\cap p\N|$. We define $CS_k^{max}(n)$ to be the set of elements from $CS_k(n)$ which have maximal cardinality $\mathfrak{cn}_k(n)$ among the elements of $CS_k(n)$. Elements from $CS_k^{max}(n)$ are called \textit{maximum} $(n,k)$-\textit{shelves}.

In \cite{dewiljes:2017a} questions concerning the cardinality of maximum shelves were considered. The maximal number of prime divisors of elements in (maximum) shelves was investigated in \cite{dewiljes:2017b}. It was shown that there exists a maximum shelf containing only elements with at most two prime divisors. Following this idea, we will construct such maximum $(n,3)$-shelves in this paper. Well known results from matching theory will be applied in the process.

We should mention that the ``direct" approach of expanding a maximum $(n,k)$-shelf does in general not lead to a maximum $(m,k)$-shelf for $m>n$. For example the maximum $(6,2)$-shelf $\{1,2,3,5,6\}$ is no subset of any maximum $(m,2)$-shelf for $m\geq 9$, since the number $6$ is not contained in any of those shelves. Using results from \cite{dewiljes:2017b} the following can be shown (we denote by $\pi$ the \textit{prime counting function}, by $\omega(a)$ the \textit{number of prime divisors} and by $\mathrm{ssp}(a)$ the \textit{second smallest prime divisor} of $a$):

\begin{proposition}
Let $k>2$, $n$ with $\pi(\sqrt{n})-\pi(\sqrt[k]{n})\geq 2$ and
\[
m\geq\max_{A\in CS_k^{max}(n)}\min_{\genfrac{}{}{0pt}{}{a\in A}{\omega(a)>1}}\mathrm{ssp(a)}^k
\]
be positive integers. Then no maximum $(n,k)$-shelf is a subset of any maximum $(m,k)$-shelf.
\end{proposition}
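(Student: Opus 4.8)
The plan is to argue by contradiction via a local exchange: assuming a maximum $(n,k)$-shelf $A$ is contained in a maximum $(m,k)$-shelf $B$, I will delete one suitable composite element of $B$ and insert two prime powers, obtaining a shelf of strictly larger cardinality and so contradicting the maximality of $B$. The hypotheses serve to make the threshold $m$ meaningful; the content is the exchange.

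As a preliminary step I would check that
\[
M:=\max_{A\in CS_k^{max}(n)}\ \min_{\substack{a\in A\\ \omega(a)>1}}\mathrm{ssp}(a)^k
\]
is a well-defined finite number, i.e. that every maximum $(n,k)$-shelf contains at least one element with more than one prime divisor (otherwise the inner minimum is empty and the bound on $m$ is vacuous). This is exactly where $k>2$ and $\pi(\sqrt n)-\pi(\sqrt[k]{n})\geq 2$ enter: there are then at least two primes $q$ with $\sqrt[k]{n}<q\leq\sqrt n$, and for such a $q$ the only prime-power multiples of $q$ in $[n]$ are $q,q^2,\dots,q^{j}$ with $2\leq j\leq k-1$ (since $q^2\leq n<q^k$). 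Thus strictly fewer than the $k$ admissible slots counted by $o_q\leq k$ can be filled by prime powers, and by the saturation results of \cite{dewiljes:2017b} a shelf of maximum size $\mathfrak{cn}_k(n)$ must use these slots up, forcing a composite multiple of $q$ into the shelf. I expect this verification --- ruling out maximum shelves built from $1$ and prime powers alone --- to be the main obstacle, since it is where the earlier structural theory is genuinely needed; the exchange below is elementary.

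Now fix $A\in CS_k^{max}(n)$ and suppose, for contradiction, that $A\subseteq B$ for some $B\in CS_k^{max}(m)$. Let $a\in A$ realise the inner minimum in $M$ and let $p<q$ be its two smallest prime divisors, so that $q=\mathrm{ssp}(a)$ and
\[
p^k<q^k\leq M\leq m .
\]
Consequently the prime powers $p^1,\dots,p^k$ and $q^1,\dots,q^k$ all lie in $[m]$, and each of them has a single prime divisor. Since $a\in B$ is a multiple of $p$ but, having $\omega(a)>1$, is not a power of $p$, at most $o_p(B)-1\leq k-1$ of the $k$ powers $p^1,\dots,p^k$ can belong to $B$; hence some $p^i\notin B$, and for the same reason some $q^j\notin B$.

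Finally I would set $B':=(B\setminus\{a\})\cup\{p^i,q^j\}$ and verify the shelf condition. Deleting $a$ and inserting $p^i,q^j$ leaves $o_p$ and $o_q$ unchanged, $o_p(B')=o_p(B)\leq k$ and $o_q(B')=o_q(B)\leq k$, while every other count $o_r$ either decreases (when $r\mid a$) or is unaffected; hence $B'\in CS_k(m)$. As $a,p^i,q^j$ are three distinct integers with $a\in B$ and $p^i,q^j\notin B$, we get $|B'|=|B|+1$, contradicting the maximality of $B$. Therefore $A\not\subseteq B$; since $A$ and $B$ were arbitrary, no maximum $(n,k)$-shelf is a subset of any maximum $(m,k)$-shelf.
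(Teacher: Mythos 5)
Your central exchange argument is correct and complete: for $a\in A\subseteq B$ realising the inner minimum, at most $k-1$ of the powers $p,p^2,\dots,p^k$ (respectively $q,\dots,q^k$, all of which lie in $[m]$ because $p^k<q^k=\mathrm{ssp}(a)^k\leq m$) can belong to $B$, since $a$ itself occupies one of the at most $k$ slots counted by $o_p(B)$ and by $o_q(B)$; trading $a$ for a missing $p^i$ and a missing $q^j$ leaves $o_p$ and $o_q$ unchanged, decreases every other affected count, and strictly enlarges $B$, contradicting $B\in CS_k^{max}(m)$. The paper states the proposition without an explicit proof, deferring to results of the second part of the series, but your exchange is visibly the intended mechanism: it is precisely the argument behind the introductory example that $6\in\{1,2,3,5,6\}$ lies in no maximum $(m,2)$-shelf once $m\geq 9=\mathrm{ssp}(6)^2$. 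You also correctly isolate the only genuinely delicate point, namely that the inner minimum runs over a nonempty set for \emph{every} maximum $(n,k)$-shelf, so that the bound on $m$ is not vacuous.

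However, the justification you offer for that point is false as stated, though easily repaired. It is not true that a maximum shelf must ``use up'' all $k$ slots of each prime $q\in\pz(\sqrt[k]{n},\sqrt{n})$: the paper's own construction for $k=3$ produces maximum shelves in which, whenever $|F(n)|$ is odd, the leftover prime $a_s$ satisfies $o_{a_s}=2<3$, because only $a_s,a_s^2\leq n$ and $a_s$ appears in no product. So no saturation result of the kind you invoke can hold, and \cite{dewiljes:2017b} is not needed here. The correct, elementary repair uses \emph{both} primes supplied by the hypothesis $\pi(\sqrt{n})-\pi(\sqrt[k]{n})\geq 2$: if some $A\in CS_k^{max}(n)$ contained only $1$ and prime powers, then for two primes $q_1,q_2\in\pz(\sqrt[k]{n},\sqrt{n})$ every multiple of $q_i$ in $A$ would be a power of $q_i$, of which at most $k-1$ exist in $[n]$ (as $q_i^k>n$); hence $o_{q_1}(A),o_{q_2}(A)\leq k-1$, and since $q_1q_2\leq n$ while $q_1q_2\notin A$ (it has two prime divisors), the set $A\cup\{q_1q_2\}$ would be a strictly larger $(n,k)$-shelf, a contradiction. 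With this one-line substitution for your appeal to saturation, your proof is complete and self-contained.
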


\section{Shifting and a tool from matching theory}

Similarly as in \cite{dewiljes:2017b} we will consider a transformation which preserves the condition $o_p(A)\leq k$, thus, mapping shelves onto shelves. We will use a specific order $\prec$ on $\N_{>1}$ to make sure that this operation is \textit{idempotent} (which facilitates the formulation). For $a\neq b\in\N_{>1}$ we define (using $e_p(a)$ as the \textit{exponent} of $p$ in the prime factorization of $a$)
\begin{align*}
a\prec b:\Longleftrightarrow~&\omega(a)<\omega(b)\text{ or}\\
&(\omega(a)=\omega(b)\text{ and }e_q(a)<e_q(b)\text{ with }q=\min\{p\in\pz:e_p(a)\neq e_p(b)\}).
\end{align*}

For a $(n,k)$-shelf $A$ we carry out the following steps ($p_i$ denotes the $i$th prime number):
\begin{itemize}
\item Set $A_0:=A$,
\item for every $i\in[\pi(n)]$ (in ascending order) we pick $e_i:=\min\{o_{p_i}(A_{i-1}),\lfloor\log_{p_i}n\rfloor\}$ (with respect to $\prec$) smallest multiples $a_{i1},a_{i2},\dots,a_{ie_i}$ of $p_i$ from $A_{i-1}$ and define
\[
A_i:=(A_{i-1}\setminus\{a_{i1},a_{i2},\dots,a_{ie_i}\})\cup\{p_i,p_i^2,\dots,p_i^{e_i}\}.
\]
\end{itemize}
This procedure, which essentially substitutes elements of $A$ with powers of one of its prime divisors (not necessarily the smallest), defines the \textit{shifting operation}
\[
\mathcal{S}:CS_k(n)\rightarrow CS_k(n),\quad A\mapsto A_{\pi(n)}.
\]

\begin{lemma}
For every $A\in CS_k^{max}(n)$ and every $i\in[\pi(n)]$ we have $e_i=\lfloor\log_{p_i}n\rfloor$.
\end{lemma}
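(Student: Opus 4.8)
The plan is to run the shifting process one prime at a time and to prove, by induction on $i$, that every intermediate set $A_{i-1}$ is again a maximum shelf; the equality $e_i=\lfloor\log_{p_i}n\rfloor$ then drops out of a short exchange argument applied to $A_{i-1}$. For the base case $A_0=A$ is maximum by hypothesis. Granting that each shifting step preserves cardinality (so that $A_i\in CS_k^{max}(n)$ whenever $A_{i-1}$ is), the whole statement reduces to the following local claim: if $B\in CS_k^{max}(n)$ and $p$ is a prime, then $o_p(B)\ge\lfloor\log_p n\rfloor$ in the range of $n$ under consideration. Since $e_i=\min\{o_{p_i}(A_{i-1}),\lfloor\log_{p_i}n\rfloor\}$, this local claim applied to $B=A_{i-1}$ and $p=p_i$ gives exactly $e_i=\lfloor\log_{p_i}n\rfloor$.

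The heart of the proof is this exchange argument. Suppose, for contradiction, that $o_{p_i}(A_{i-1})<\lfloor\log_{p_i}n\rfloor$. The numbers $p_i,p_i^2,\dots,p_i^{\lfloor\log_{p_i}n\rfloor}$ are precisely the pure powers of $p_i$ lying in $[n]$, and there are $\lfloor\log_{p_i}n\rfloor$ of them; since $A_{i-1}$ contains strictly fewer than $\lfloor\log_{p_i}n\rfloor$ multiples of $p_i$ in total, at least one such power $p_i^{j}$ is absent from $A_{i-1}$. Adjoining it changes only the count at the prime $p_i$, raising $o_{p_i}$ by one; and as $o_{p_i}(A_{i-1})<\lfloor\log_{p_i}n\rfloor\le k$ (this last inequality being where the ambient relation between $n$ and $k$ enters), the enlarged set still satisfies $o_q\le k$ for every prime $q$ and is thus a shelf of strictly larger cardinality than $A_{i-1}$. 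This contradicts the maximality of $A_{i-1}$, so $o_{p_i}(A_{i-1})\ge\lfloor\log_{p_i}n\rfloor$, as required.

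It remains to justify the cardinality claim underlying the induction, and this is where the order $\prec$ does its work. At step $i$ one deletes the $e_i$ $\prec$-smallest multiples of $p_i$ and inserts $p_i,\dots,p_i^{e_i}$. Every pure power of $p_i$ has $\omega=1$ and so precedes all composite multiples of $p_i$, while among themselves the pure powers are ranked by ascending exponent; hence the deleted multiples include every pure power $p_i^{m}$ with $m\le e_i$ that was already present, and any surviving pure power of $p_i$ has exponent exceeding $e_i$. Consequently none of the inserted elements $p_i,\dots,p_i^{e_i}$ already lies in $A_{i-1}\setminus\{\text{deleted}\}$, the step removes and adds the same number of distinct elements, and $|A_i|=|A_{i-1}|$. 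I expect the main obstacle to be exactly the bookkeeping hidden here: shifting at a smaller prime $p_j$ can delete composite numbers that are simultaneously multiples of $p_i$, so $o_{p_i}$ may genuinely drop as $i$ is approached, and one cannot simply transport the inequality $o_{p_i}(A)\ge\lfloor\log_{p_i}n\rfloor$ from $A$ to $A_{i-1}$. Phrasing the exchange at the level of the maximum shelf $A_{i-1}$ rather than the original $A$ is what circumvents this, but it forces one to pin down cardinality preservation first, together with the inequality $\lfloor\log_{p_i}n\rfloor\le k$ that supplies the spare budget the exchange consumes.
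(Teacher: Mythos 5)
Your argument is correct (under the hypothesis you flag) and its kernel is the same exchange the paper uses: the paper's entire proof is the single sentence that otherwise $A\cup\{p_i^j\}$ would be a larger $(n,k)$-shelf than $A$ for some $j\leq\lfloor\log_{p_i}n\rfloor$. What you add is exactly the scaffolding this one-liner tacitly needs. Since $e_i$ is defined via $o_{p_i}(A_{i-1})$, and earlier steps can delete composite multiples of $p_i$, the deficiency $e_i<\lfloor\log_{p_i}n\rfloor$ is a statement about $A_{i-1}$, not about $A$ --- so the exchange must be performed in $A_{i-1}$, and your induction showing every $A_{i-1}\in CS_k^{max}(n)$ supplies precisely the missing license; you correctly diagnose why transporting $o_{p_i}(A)\geq\lfloor\log_{p_i}n\rfloor$ from $A$ fails. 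The cardinality preservation you rederive from $\prec$ is the paper's Lemma~\ref{le:shiftingPreservesSize} (stated \emph{after} the present lemma, but proved independently of it, so your use of it creates no circularity), and your disjointness argument coincides with its proof. The only step you leave implicit is that $A_i\in CS_k(n)$ at all, which your maximality conclusion also needs; it is immediate, since $o_{p_i}$ is unchanged by the swap and every other $o_q$ can only decrease.

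One caveat, which speaks for you rather than against you: the ``ambient relation'' $\lfloor\log_{p_i}n\rfloor\leq k$ that you invoke appears nowhere in the paper's hypotheses, and it cannot be dropped. For small primes it fails outright: with $k=3$, $n=100$, $p_1=2$, every shelf satisfies $o_2(A_0)\leq 3<6=\lfloor\log_2 100\rfloor$, so $e_1\leq 3$ and the lemma as printed is false; the paper's one-line proof breaks at the same point, since adjoining a power of $2$ to a maximum shelf with $o_2(A)=k$ violates $o_2\leq k$ instead of producing a larger shelf. The statement that is actually true --- and all that Lemma~\ref{le:StructureShiftedThree} requires, since for $p>\sqrt[3]{n}$ one has $\lfloor\log_p n\rfloor\leq 2<k$ --- is $e_i=\min\{k,\lfloor\log_{p_i}n\rfloor\}$; your exchange argument, run also in the case $\lfloor\log_{p_i}n\rfloor>k$ (where $o_{p_i}(A_{i-1})<k$ still leaves some pure power $p_i^j\leq n$ free to adjoin without exceeding the budget $k$), proves exactly this corrected version. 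So: same key idea as the paper, but a more complete execution that exposes and repairs a genuine imprecision in both the paper's statement and its proof.
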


\begin{proof}
Otherwise $A\cup\{p_i^j\}$ would be a larger $(n,k)$-shelf than $A$ for some $j\leq\lfloor\log_{p_i}n\rfloor$.
\end{proof}

\begin{lemma}\label{le:shiftingPreservesSize}
We have $|S(A)|=|A|$ for $A\in CS_k(n)$ and especially $\mathcal{S}(CS_k^{max}(n))\subseteq CS_k^{max}(n)$.
\end{lemma}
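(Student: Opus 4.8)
The plan is to show that each single step $A_{i-1}\mapsto A_i$ of the shifting procedure preserves cardinality; the claim $|\mathcal{S}(A)|=|A|$ then follows by induction on $i\in[\pi(n)]$, and the statement about maximum shelves is an immediate corollary. So I would fix $A\in CS_k(n)$ and analyze one step, writing $R_i:=\{a_{i1},\dots,a_{ie_i}\}$ for the removed multiples and $P_i:=\{p_i,\dots,p_i^{e_i}\}$ for the added powers, so that $A_i=(A_{i-1}\setminus R_i)\cup P_i$.

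First I would record two facts that hold throughout the procedure. One, every $A_i$ stays inside $[n]$: the only new elements are the powers $p_i,\dots,p_i^{e_i}$, and $e_i\leq\lfloor\log_{p_i}n\rfloor$ forces $p_i^{e_i}\leq n$, so by induction $A_i\subseteq[n]$. Two, among the multiples of $p_i$ lying in $A_{i-1}$, the pure powers of $p_i$ are exactly the $\prec$-smallest ones. Indeed, a multiple of $p_i$ satisfies $\omega=1$ if and only if it is a power of $p_i$, and since $\prec$ orders first by $\omega$, every power of $p_i$ precedes every multiple of $p_i$ carrying a second prime factor; within the powers, $\prec$ merely compares exponents at $p_i$.

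The heart of the argument is then the claim that $R_i$ always contains every power of $p_i$ present in $A_{i-1}$. I would split into the two cases defining $e_i$. If $e_i=o_{p_i}(A_{i-1})$, then $R_i$ comprises all multiples of $p_i$ in $A_{i-1}$, hence trivially all of its powers. If instead $e_i=\lfloor\log_{p_i}n\rfloor$, then because every power $p_i^{\ell}\in A_{i-1}$ satisfies $p_i^{\ell}\leq n$, i.e.\ $\ell\leq\lfloor\log_{p_i}n\rfloor$, there are at most $e_i$ such powers; being $\prec$-minimal among the multiples of $p_i$ by fact two, they are all among the $e_i$ chosen ones. In either case $A_{i-1}\setminus R_i$ contains no power of $p_i$ at all, whereas $P_i$ consists solely of such powers. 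Hence $P_i\cap(A_{i-1}\setminus R_i)=\emptyset$, and since the $a_{ij}$ are pairwise distinct we obtain
\[
|A_i|=|A_{i-1}|-|R_i|+|P_i|=|A_{i-1}|-e_i+e_i=|A_{i-1}|.
\]

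Induction over the $\pi(n)$ steps then yields $|\mathcal{S}(A)|=|A_{\pi(n)}|=|A_0|=|A|$. For the addendum, take $A\in CS_k^{max}(n)$: the map $\mathcal{S}$ sends $A$ into $CS_k(n)$, and $|\mathcal{S}(A)|=|A|=\mathfrak{cn}_k(n)$ is maximal, so $\mathcal{S}(A)\in CS_k^{max}(n)$, giving $\mathcal{S}(CS_k^{max}(n))\subseteq CS_k^{max}(n)$. I expect the only delicate point to be the counting in the second case: one must invoke fact one (all elements of $A_{i-1}$ are bounded by $n$) to cap the number of $p_i$-powers by $\lfloor\log_{p_i}n\rfloor=e_i$. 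Without the order $\prec$ giving powers priority, a collision between $P_i$ and a surviving multiple of $p_i$ could drop the cardinality, so the role of $\prec$ in the selection is exactly what must be used carefully.
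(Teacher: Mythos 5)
Your proof is correct and follows essentially the same route as the paper's: both hinge on the observation that, by the definition of $\prec$, any powers of $p_i$ already present in $A_{i-1}$ are among the $e_i$ removed elements, so the added set $\{p_i,\dots,p_i^{e_i}\}$ cannot collide with surviving elements and each step preserves cardinality. Your version merely spells out the case distinction on $e_i$ and the bound $A_{i-1}\subseteq[n]$ that the paper leaves implicit in its one-line argument.
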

\begin{proof}
We have $|\{a_{i1},a_{i2},\dots,a_{ie_i}\}|=|\{p_i,p_i^2,\dots,p_i^{e_i}\}|$ and from the definition of $\prec$ we get $A_{i-1}\cap\{p_i,p_i^2,\dots,p_i^{e_i}\}\subseteq\{a_{i1},a_{i2},\dots,a_{ie_i}\}$ for every $i\in[\pi(n)]$. This implies $|A_i|=|A_{i-1}|$.
\end{proof}

We use the notation $\pz(x,y):=\pz\cap(x,y]$ for $x,y\in\R$. Since $S(\pz(1,n))=\pz(1,n)\in CS_1(n)$ and $S(\pz(1,n)\cup\{p^2:p\in\pz(1,\sqrt{n})\})=\pz(1,n)\cup\{p^2:p\in\pz(1,\sqrt{n})\}\in CS_2(n)$ and both sets cannot be expanded, as proper supersets do not belong to $CS_1(n)$ resp. $CS_2(n)$, we obtain

\begin{proposition}
For $n\in\N$ we have $\pz(1,n)\in CS_1^{max}(n)$ and $\pz(1,n)\cup\{p^2:p\in\pz(1,\sqrt{n})\}\in CS_2^{max}(n)$.
\end{proposition}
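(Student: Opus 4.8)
The plan is to treat both statements in parallel, writing $X_1:=\pz(1,n)$ and $X_2:=\pz(1,n)\cup\{p^2:p\in\pz(1,\sqrt n)\}$ for the two candidate shelves. For each I would establish two facts: first, that $X_k$ is a genuine element of $CS_k(n)$ that cannot be enlarged (it is \emph{maximal}); and second, that every \emph{maximum} shelf is carried by the shifting operation $\mathcal S$ exactly onto $X_k$, which upgrades maximality to maximum cardinality. This second fact is the whole point, since in general a maximal clique need not be a maximum one; the shifting machinery of this section is precisely what closes that gap. (Throughout I take the standing convention, inherited with $\prec$ on $\N_{>1}$, that the universal vertex $1$ is excluded from shelves, since otherwise it could always be adjoined.)

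For the first fact I would argue directly. In $X_1$ every prime $p\le n$ divides exactly one element, namely $p$ itself, so $o_p(X_1)=1$ and $X_1\in CS_1(n)$; moreover any $a>1$ one might adjoin has a prime factor $q\le n$, and since $q\in X_1$ already, this forces $o_q\ge 2$, so $X_1$ is maximal. In $X_2$ the multiples of a prime $p$ are $p$ together with $p^2$ exactly when $p\le\sqrt n$, so $o_p(X_2)\le 2$ and $X_2\in CS_2(n)$. For maximality I would use that $p\le\sqrt n\Leftrightarrow p^2\le n$ and that every composite $a\le n$ has smallest prime factor at most $\sqrt n$: adjoining a higher power $p^j$ or any new composite would create a third multiple of some prime $p\le\sqrt n$ (for which $p$ and $p^2$ are already present), while a square $q^2$ with $q>\sqrt n$ simply exceeds $n$. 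Hence no element can be added.

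For the second fact I would start from an arbitrary $A\in CS_k^{max}(n)$ and pass to $\mathcal S(A)$, which by Lemma~\ref{le:shiftingPreservesSize} again lies in $CS_k^{max}(n)$ and satisfies $|\mathcal S(A)|=|A|$. By construction $\mathcal S(A)$ contains $p_i,p_i^2,\dots,p_i^{e_i}$ for every prime $p_i\le n$, and by the Lemma describing the exponents $e_i$ for maximum shelves (noting that $o_{p_i}\le k$ always forces $e_i\le k$, so in fact $e_i=\min\{k,\lfloor\log_{p_i}n\rfloor\}$) these powers are exactly the elements of $X_k$: for $k=1$ one recovers every $p_i$, and for $k=2$ one recovers every $p_i$ together with $p_i^2$ for $p_i\le\sqrt n$. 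Thus $X_k\subseteq\mathcal S(A)$. Since $\mathcal S(A)\in CS_k(n)$ and $X_k$ is maximal by the first fact, this inclusion must be an equality, whence $\mathfrak{cn}_k(n)=|A|=|\mathcal S(A)|=|X_k|$ and $X_k\in CS_k^{max}(n)$.

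I expect the main obstacle to be the maximality check in the case $k=2$ together with the correct use of the exponent Lemma. The delicate points are that the bound $o_p\le k$ must genuinely be saturated by the prescribed prime powers, so that no stray composite with a large prime factor and no additional prime square can slip in, and that the exponents returned by $\mathcal S$ are capped at $k$ rather than at $\lfloor\log_{p_i}n\rfloor$ alone. Once these are pinned down the two assertions follow uniformly, and the same scheme is what one would hope to push to larger $k$.
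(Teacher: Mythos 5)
Your proposal is correct and takes essentially the same route as the paper, which derives the proposition from exactly this shifting machinery: the two candidate sets lie in $CS_1(n)$ resp.\ $CS_2(n)$, admit no proper superset there, and (via Lemmas 1 and 2) the image under $\mathcal{S}$ of any maximum shelf must contain, hence equal, them. If anything your write-up is more complete than the paper's one-line derivation, since you make explicit the bridge from inclusion-maximality to maximum cardinality, correct the exponent lemma to the capped form $e_i=\min\{k,\lfloor\log_{p_i}n\rfloor\}$ (the uncapped statement cannot hold for small primes when $k$ is small), and flag the status of the element $1$, which under the paper's literal definitions could always be adjoined and is indeed included in the maximum $(n,3)$-shelf of the later theorem.
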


Thus, the first interesting case is $k=3$. To approach this problem we are going to use

\begin{lemma}\label{le:StructureShiftedThree}
For every $A\in CS_3^{max}(n)$ the set $S(A)$ contains only the number $1$, prime powers and elements of the form $pq$ with $p\in\pz(\sqrt[3]{n},\sqrt{n})$ and $q\in\pz(\sqrt[3]{n},n)$.
\end{lemma}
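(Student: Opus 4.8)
The plan is to analyse $B:=\mathcal{S}(A)$, which lies in $CS_3^{max}(n)$ by Lemma~\ref{le:shiftingPreservesSize}. Each element of $B$ is $1$, a prime power, or a number $a$ with $\omega(a)\geq 2$, so the statement reduces to showing that every such $a$ is of the form $pq$ with distinct primes $p,q>\sqrt[3]{n}$. Granting for the moment that no $a\in B$ with $\omega(a)\geq 2$ has a prime factor $\leq\sqrt[3]{n}$, the rest is bookkeeping on sizes: three distinct prime factors above $\sqrt[3]{n}$ would give $a>(\sqrt[3]{n})^3=n$, and a repeated prime factor would give $a\geq p^2q>n$, both impossible; hence $a=pq$ with $p<q$. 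Then $p^2<pq\leq n$ yields $p\in\pz(\sqrt[3]{n},\sqrt{n})$, and $q=a/p<n/\sqrt[3]{n}=n^{2/3}$ yields $q\in\pz(\sqrt[3]{n},n)$, as required.

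The crux is therefore the claim that for every prime $p\leq\sqrt[3]{n}$ the only multiples of $p$ in $B$ are $p,p^2,p^3$. First, because $p^3\leq n$ and $B$ is a maximum shelf, $o_p(B)=3$: if $o_p(B)<3$ then one of $p,p^2,p^3$ is absent and adjoining it keeps the shelf condition while enlarging $B$, a contradiction. It then suffices to prove $\{p,p^2,p^3\}\subseteq B$, for then these three powers already account for $o_p(B)=3$ and exhaust the multiples of $p$, so no $a\in B$ with $\omega(a)\geq 2$ can be divisible by $p$.

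To establish $\{p,p^2,p^3\}\subseteq B$ I would track $o_p$ along the shifting chain $A=A_0,A_1,\dots,A_{\pi(n)}=B$, with $p=p_i$. Each step removes multiples of a single prime and inserts powers of that same prime; a step for a prime $p_j\neq p$ adds no multiple of $p$ and deletes no power of $p$, while the step for $p$ removes $e_i$ multiples of $p$ and inserts $p,\dots,p^{e_i}$. Hence $o_p$ is non-increasing along the whole chain, and the only place a power of $p$ can be deleted is the step for $p$, where it is at once reinstated. Since $o_p(A)=3=o_p(B)$ by maximality at both ends, $o_p$ stays equal to $3$ throughout; in particular $o_p(A_{i-1})=3$, so $e_i=\min\{3,\lfloor\log_{p_i}n\rfloor\}=3$ (using $\lfloor\log_p n\rfloor\geq 3$ for $p\leq\sqrt[3]{n}$). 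The step for $p$ thus inserts $p,p^2,p^3$, and no later step removes them.

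The main obstacle is exactly this last argument: one must exclude the possibility that the shifts for the smaller primes $p_j<p$ strip away enough multiples of $p$ that the step for $p$ reinstates fewer than three powers. The monotonicity of $o_p$ together with the equality $o_p(B)=3$ forced by maximality is what pins this down; alternatively one could invoke the idempotence of $\mathcal{S}$, noting that in a fixed point the $\prec$-smallest multiples of each prime must be its powers. Once the claim is secured, the cardinality estimates of the first paragraph finish the proof.
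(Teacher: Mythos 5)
Your argument is correct, and it is worth noting that the paper gives no proof of this lemma at all: it is stated bare, with the reader left to assemble it from the mechanics of the shifting operation (following the precedent of \cite{dewiljes:2017b}). Your write-up supplies exactly the missing verification. The size bookkeeping in your first paragraph is sound (three prime factors above $\sqrt[3]{n}$ or a repeated one force the element past $n$, and the interval memberships of $p$ and $q$ follow), and your crux claim---that every multiple of a prime $p\leq\sqrt[3]{n}$ in $\mathcal{S}(A)$ is a pure power of $p$---is established correctly via the monotonicity of $o_p$ along the chain together with $o_p(A)=o_p(\mathcal{S}(A))=3$ forced by maximality at both ends. One simplification is available: maximality is not needed for the crux. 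For $p=p_i\leq\sqrt[3]{n}$ one has $\lfloor\log_{p_i}n\rfloor\geq 3\geq o_{p_i}(A_{i-1})$, hence $e_i=o_{p_i}(A_{i-1})$ unconditionally, so the $i$th step removes \emph{all} multiples of $p_i$ from $A_{i-1}$ and replaces them by $p_i,\dots,p_i^{e_i}$; later steps delete only multiples of larger primes (never pure powers of $p_i$) and insert only powers of those larger primes (never multiples of $p_i$). Thus every multiple of $p_i$ in $\mathcal{S}(A)$ is a prime power for \emph{every} $A\in CS_3(n)$, maximum or not, and your monotonicity ledger collapses to a single observation; maximality then serves only to guarantee, via Lemma \ref{le:shiftingPreservesSize}, that the resulting structured set is again a maximum shelf. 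A final merit of your route: by deriving $e_i=3$ directly you avoid leaning on the paper's Lemma 1, whose literal claim $e_i=\lfloor\log_{p_i}n\rfloor$ cannot hold once $\lfloor\log_{p_i}n\rfloor>k$ (since always $e_i\leq o_{p_i}(A_{i-1})\leq k$); the intended reading is $e_i=\min\{k,\lfloor\log_{p_i}n\rfloor\}$, which is precisely what you prove in the case $k=3$, $p\leq\sqrt[3]{n}$.
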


In view of Lemmas \ref{le:shiftingPreservesSize} and \ref{le:StructureShiftedThree} we solely need to maximize the number of elements $pq$ with $p\in\pz(\sqrt[3]{n},\sqrt{n})$ and $q\in\pz(\sqrt{n},n)$, while using every $p$ at most once and every $q$ at most twice, to construct a maximum $(n,3)$-shelf. To find such a maximal number of products we will make use of the following graph theoretical result from Claude Berge \cite{berge:1957}:

\begin{theoremB}[Berge 1957]
A matching $M$ of a graph $G$ is a maximum matching if and only if there exists no $M$-augmenting path in $G$.
\end{theoremB}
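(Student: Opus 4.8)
The plan is to prove both implications via their contrapositives, with the symmetric difference of two matchings as the central tool. First I would fix the relevant vocabulary: an \emph{$M$-alternating path} is a path whose edges lie alternately in and outside $M$, and an \emph{$M$-augmenting path} is an $M$-alternating path both of whose endpoints are exposed, i.e.\ covered by no edge of $M$. For two edge sets I write $M\triangle N$ for their symmetric difference.

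For the forward direction I would argue the contrapositive: if an $M$-augmenting path $P$ exists, then $M$ is not maximum. Since $P$ alternates and begins and ends with non-matching edges at exposed vertices, it contains exactly one more edge outside $M$ than inside $M$. I would then check that $M':=M\triangle E(P)$ is again a matching: the only vertices whose incidence changes are the internal vertices of $P$, which merely exchange one matched edge for another, and the two endpoints, which each acquire exactly one matched edge. It follows that $|M'|=|M|+1$, so $M$ is not maximum.

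For the converse I would again use the contrapositive: assuming $M$ is not maximum, I would produce an augmenting path. Choose a matching $M^{*}$ with $|M^{*}|>|M|$ and consider the subgraph $H$ on edge set $M\triangle M^{*}$. Each vertex is incident to at most one edge of $M$ and at most one of $M^{*}$, so every vertex of $H$ has degree at most $2$; hence $H$ decomposes into vertex-disjoint paths and cycles. The main step is the structural analysis of these components: along any such path or cycle the edges alternate between $M$ and $M^{*}$, so every cycle and every component with equally many edges of each matching contributes no surplus, and only a path whose two end-edges both lie in $M^{*}$ has more $M^{*}$-edges than $M$-edges. Because $|M^{*}|>|M|$, at least one component must have this surplus, forcing it to be a path beginning and ending with $M^{*}$-edges; its two endpoints are then necessarily exposed by $M$, so this component is precisely an $M$-augmenting path.

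The delicate point I expect to matter most is this final structural argument. One must verify carefully that the degree bound genuinely forces the union-of-paths-and-cycles picture, that alternation along each component is automatic from both $M$ and $M^{*}$ being matchings, and that the endpoints of a surplus path are truly $M$-exposed, since otherwise the omitted $M$-edge at such an endpoint would belong to $H$ and the component would extend further. Once these combinatorial facts are pinned down, both directions close immediately, and no estimation or numerical computation is required.
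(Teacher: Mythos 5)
Your proof is correct, but note that the paper does not prove this statement at all: Theorem B is imported as a known result, cited to Berge's 1957 paper, and used as a black box in the maximality proof for the matching $M(n)$. Your argument is the standard textbook proof via symmetric differences, and all the delicate points you flag are handled properly --- in particular the verification that an endpoint of a surplus path is $M$-exposed (since an incident $M$-edge would either lie in $M\triangle M^{*}$ and extend the component, or lie in $M^{*}$ and violate the matching property of $M^{*}$) is exactly the step most often glossed over, and you close it correctly. So there is nothing to compare against in the paper itself; your write-up would serve as a complete self-contained proof of the cited theorem.
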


\section{Construction procedure for $k=3$}

We build the bipartite graph $G(n)$ (see Figure \ref{fi:BipartiteGraphMaximumMatching}) in the following way:
\begin{itemize}
\item It has vertex multiset\footnote{We could also view it as a set of size $2\pi(n)-\pi(\sqrt{n})-\pi(\sqrt[3]{n})$ and label the vertices accordingly. Our way avoids having to distinguish between vertex and label, which could become confusing.} $\pz(\sqrt[3]{n},\sqrt{n})\cup\pz(\sqrt{n},n)\cup\pz(\sqrt{n},n)$ and
\item an edge exactly between every $p\in\pz(\sqrt[3]{n},\sqrt{n})$ and $q\in\pz(\sqrt{n},n)$ with $pq\leq n$.
\end{itemize}

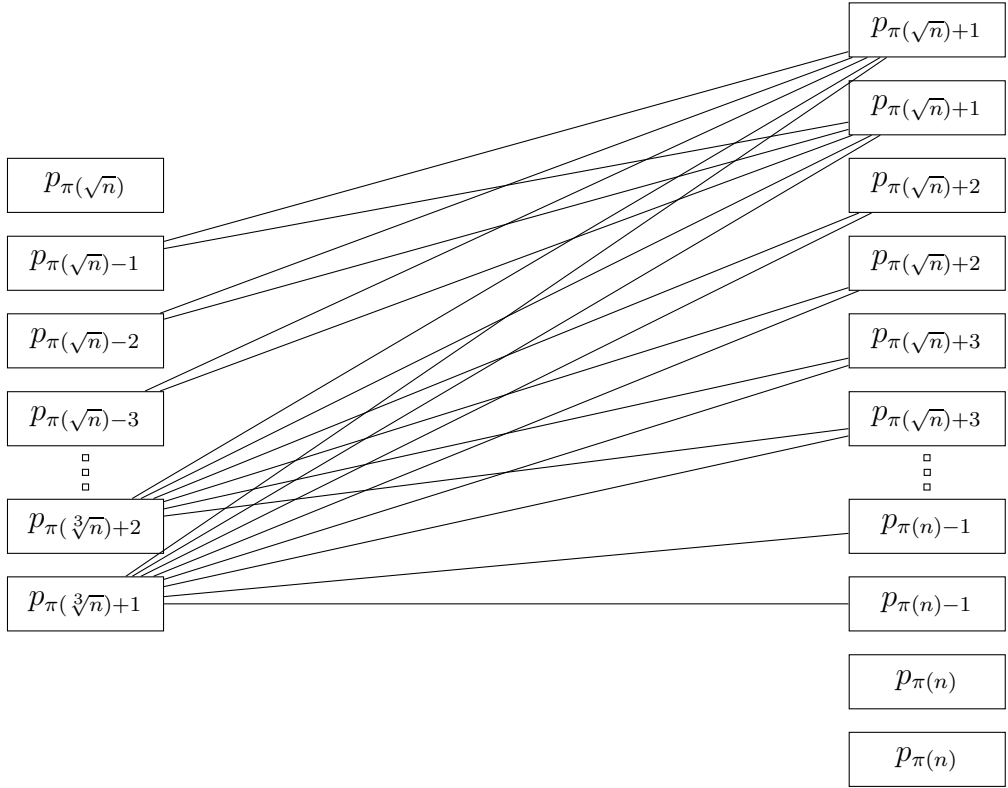
\begin{figure}[h]
\centering
\begin{tikzpicture}
\matrix[matrix, row sep=0.3cm, column sep=9cm]
{ & \node[text width=\TextWidth, draw, align=center, minimum height=\myheight] (q1) {$p_{\pi(\sqrt{n})+1}$}; \\
& \node[text width=\TextWidth, draw, align=center, minimum height=\myheight] (q2) {$p_{\pi(\sqrt{n})+1}$}; \\
\node[text width=\TextWidth, draw, align=center, minimum height=\myheight] (p1) {$p_{\pi(\sqrt{n})}$}; & \node[text width=\TextWidth, draw, align=center, minimum height=\myheight] (q3) {$p_{\pi(\sqrt{n})+2}$}; \\
\node[text width=\TextWidth, draw, align=center, minimum height=\myheight] (p2) {$p_{\pi(\sqrt{n})-1}$}; & \node[text width=\TextWidth, draw, align=center, minimum height=\myheight] (q4) {$p_{\pi(\sqrt{n})+2}$}; \\
\node[text width=\TextWidth, draw, align=center, minimum height=\myheight] (p3) {$p_{\pi(\sqrt{n})-2}$}; & \node[text width=\TextWidth, draw, align=center, minimum height=\myheight] (q5) {$p_{\pi(\sqrt{n})+3}$}; \\
\node[text width=\TextWidth, draw, align=center, minimum height=\myheight] (p4) {$p_{\pi(\sqrt{n})-3}$}; & \node[text width=\TextWidth, draw, align=center, minimum height=\myheight] (q6) {$p_{\pi(\sqrt{n})+3}$}; \\
\node[draw, inner sep=0.04cm] (h1) {}; & \node[draw,inner sep=0.04cm] (h2) {}; \\
\node[text width=\TextWidth, draw, align=center, minimum height=\myheight] (p5) {$p_{\pi(\sqrt[3]{n})+2}$}; & \node[text width=\TextWidth, draw, align=center, minimum height=\myheight] (q7) {$p_{\pi(n)-1}$}; \\
\node[text width=\TextWidth, draw, align=center, minimum height=\myheight] (p6) {$p_{\pi(\sqrt[3]{n})+1}$}; & \node[text width=\TextWidth, draw, align=center, minimum height=\myheight] (q8) {$p_{\pi(n)-1}$}; \\
& \node[text width=\TextWidth, draw, align=center, minimum height=\myheight] (q9) {$p_{\pi(n)}$}; \\
& \node[text width=\TextWidth, draw, align=center, minimum height=\myheight] (q10) {$p_{\pi(n)}$}; \\
};
\node[draw,inner sep=0.04cm,below=1mm of h1] {};
\node[draw,inner sep=0.04cm,above=1mm of h1] {};
\node[draw,inner sep=0.04cm,below=1mm of h2] {};
\node[draw,inner sep=0.04cm,above=1mm of h2] {};
\draw[-] (p2) edge (q1);
\draw[-] (p2) edge (q2);
\draw[-] (p3) edge (q1);
\draw[-] (p3) edge (q2);
\draw[-] (p4) edge (q1);
\draw[-] (p4) edge (q2);
\draw[-] (p5) edge (q1);
\draw[-] (p5) edge (q2);
\draw[-] (p5) edge (q3);
\draw[-] (p5) edge (q4);
\draw[-] (p5) edge (q5);
\draw[-] (p5) edge (q6);
\draw[-] (p6) edge (q1);
\draw[-] (p6) edge (q2);
\draw[-] (p6) edge (q3);
\draw[-] (p6) edge (q4);
\draw[-] (p6) edge (q5);
\draw[-] (p6) edge (q6);
\draw[-] (p6) edge (q7);
\draw[-] (p6) edge (q8);
\end{tikzpicture}
\caption{Graph $G(n)$ for the construction of maximum $(n,3)$-shelves.}
\label{fi:BipartiteGraphMaximumMatching}
\end{figure}

To construct a maximum matching in $G(n)$, we use the following procedure:

\begin{enumerate}
\item Start with the empty matching $M$ and draw $G(n)$ as shown in Figure \ref{fi:BipartiteGraphMaximumMatching}.
\item Take the highest (meaning the smallest) not matched element $q$ from $\pz(\sqrt{n},n)\cup\pz(\sqrt{n},n)$ and find the largest (also the highest) not matched element $p$ from $\pz(\sqrt[3]{n},\sqrt{n})$ being adjacent to $q$. Expand $M$ by $\{p,q\}$. 
\item Repeat the second step until it is not possible anymore.
\item Denote the resulting matching by $M(n)$ and the set of not matched elements from\\ $\pz(\sqrt[3]{n},\sqrt{n})$ by $F(n)$.
\end{enumerate}

\newpage

\begin{lemma}
For every $n\in\N$ the matching $M(n)$ is maximum in $G(n)$.
\end{lemma}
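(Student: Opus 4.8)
The plan is to apply Theorem~B and show that $M(n)$ admits no augmenting path, arguing by contradiction with a \emph{shortest} such path. The structural fact I would record first is that $G(n)$ is a ``chain''/nested graph: since the edge relation is $pq\le n$, a smaller $p$ is adjacent to a superset of the $q$'s adjacent to any larger $p$, and symmetrically for the $q$'s, so all neighbourhoods on each side are totally ordered by inclusion. This is what makes short-cutting an augmenting path cheap and is the underlying reason the greedy procedure can succeed at all. Writing a shortest augmenting path as
\[ q_0,\,a_1,\,b_1,\,a_2,\,b_2,\,\dots,\,b_{m-1},\,a_m, \]
with unmatched endpoints $q_0$ (a $q$-vertex) and $p_0:=a_m$ (a $p$-vertex), matching edges $a_ib_i$, and non-matching edges $b_{i-1}a_i$ (setting $b_0:=q_0$), I would first dispose of $m=1$: there $q_0p_0$ is an edge joining two unmatched vertices, whence---using the nested structure to pass to the smallest unmatched $q$---Step~2 of the construction could still have produced a match, so the procedure would not have halted. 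Hence $m\ge 2$.

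For $m\ge2$ the plan is to extract two contradictory inequalities comparing $a_{m-1}$ and $a_m$, both localised at the $p_0$-end of the path. On one hand, the non-matching edge $b_{m-2}a_{m-1}$ gives $a_{m-1}\le n/b_{m-2}$; and if $b_{m-2}a_m$ were an edge, replacing the subpath $b_{m-2},a_{m-1},b_{m-1},a_m$ by the single edge $b_{m-2}a_m$ would yield a strictly shorter augmenting path between the same endpoints, contradicting minimality. So $b_{m-2}a_m$ is not an edge, i.e. $a_m>n/b_{m-2}\ge a_{m-1}$. On the other hand, the matched pair $a_{m-1}b_{m-1}$ was created when the construction processed $b_{m-1}$ and matched it to the \emph{largest} unmatched $p$ adjacent to it; since $p_0=a_m$ is adjacent to $b_{m-1}$ (the last edge of the path) and is unmatched throughout the run, it was an available candidate at that moment, forcing $a_{m-1}\ge a_m$, hence $a_{m-1}>a_m$ as the two primes are distinct. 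These two inequalities clash, so no augmenting path exists and $M(n)$ is maximum.

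I expect the main obstacle to be exactly the passage from a \emph{local} greedy comparison to a statement about an \emph{arbitrarily long} augmenting path. The greedy rule only directly compares the freshly matched $a_{m-1}$ with a permanently free vertex such as $p_0$, so without further control one cannot simply iterate such comparisons along the whole path: the relative order of the $a_i$'s depends on the order in which the $b_i$'s were processed, which need not be monotone. Passing to a shortest augmenting path is what circumvents this, since minimality forbids precisely the one chord $b_{m-2}a_m$ that I need, so a single application of the greedy rule at $b_{m-1}$ already produces the contradiction. I would still carefully check the boundary case $m=2$ (where $b_{m-2}=q_0$ is itself an endpoint) and verify that the short-cut operation genuinely preserves the alternating, augmenting, and simple-path properties.
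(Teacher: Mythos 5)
Your proof is correct and takes essentially the same route as the paper: both invoke Theorem~B, pass to a \emph{shortest} augmenting path, and combine the greedy ``largest available $p$'' rule at the matched partner of the free $p$-endpoint with the chord forced by the multiplicative edge condition (your $b_{m-2}a_m$ is the paper's $a_1b_2$). You merely organize it contrapositively---using minimality to exclude the chord and deriving the contradictory inequalities $a_m>a_{m-1}$ and $a_{m-1}>a_m$---where the paper derives the greedy inequality $a_1<a_2$ first and then exhibits the shorter path $a_1,b_2,a_3,\dots,a_r,b_r$ directly.
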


\begin{proof}
Suppose, $M(n)$ is not a maximum matching. From Theorem B we know that we find a shortest $M(n)$-augmenting path
\[
a_1,b_1,a_2,b_2,\dots,a_r,b_r,
\]
with $a_i\in\pz(\sqrt[3]{n},\sqrt{n})$, $b_i\in\pz(\sqrt{n},n)$ for $i\in[r]$. From the construction procedure follows that at least one of two adjacent vertices in $G(n)$ are matched. Therefore, we have $r\geq 2$. Since the construction method always picks the largest available element from $\pz(\sqrt[3]{n},\sqrt{n})$ and $\{a_2,b_1\}\in M(n)$ holds, we have $a_1<a_2$. From this we get
\[
a_1b_2<a_2b_2\leq n,
\]
meaning there is an edge between $a_1$ and $b_2$ in $G(n)$. But then
\[
a_1,b_2,a_3,b_3,\dots,a_r,b_r
\]
would be a shorter $M(n)$-augmenting path which yields a contradiction.
\end{proof}

Combining this result with Lemmas \ref{le:shiftingPreservesSize} and \ref{le:StructureShiftedThree} yields:

\begin{theorem}
For every $n\in\N$ the set 
\[
\{1\}\cup\left\{p^i\leq n:p\in\pz, i\in\{1,2,3\}\right\}\cup M(n)\cup\left\{a_1a_2,a_3a_4,\dots,a_{2\lfloor\nicefrac{s}{2}\rfloor-1}a_{2\lfloor\nicefrac{s}{2}\rfloor}\right\},
\]
where $F(n)=\{a_1,a_2,\dots,a_s\}$, is a maximum $(n,3)$-shelf.
\end{theorem}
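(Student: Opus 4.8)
The plan is to show that the displayed set, call it $T$, is on the one hand a legitimate $(n,3)$-shelf and on the other hand has the maximum cardinality $\mathfrak{cn}_3(n)$. The first part is a direct verification that $o_p(T)\le 3$ for every prime $p$, carried out according to the position of $p$ relative to $\sqrt[3]{n}$ and $\sqrt{n}$. For $p\le\sqrt[3]{n}$ the only multiples of $p$ in $T$ are $p,p^2,p^3$ (no product occurring in $T$ has a prime factor $\le\sqrt[3]{n}$), so $o_p(T)=3$. For $p\in\pz(\sqrt[3]{n},\sqrt{n})$ the multiples are $p,p^2$ together with at most one product: either a single edge of $M(n)$ or a single pairing $a_{2j-1}a_{2j}$, the matched and the free primes being disjoint. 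For $q\in\pz(\sqrt{n},n)$ the multiples are $q$ together with at most two products of $M(n)$, since $q$ is represented by two vertices of $G(n)$ and a matching saturates each of them at most once. In all three cases $o_p(T)\le 3$. The same bookkeeping shows every listed element lies in $[n]$; in particular $a_{2j-1}a_{2j}\le(\sqrt{n})^2=n$, so the pairing products are admissible.

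For maximality I would pass to shifted shelves. By Lemma~\ref{le:shiftingPreservesSize} it is enough to compare $|T|$ with $|\mathcal{S}(A)|$ for an arbitrary $A\in CS_3^{max}(n)$, and by Lemma~\ref{le:StructureShiftedThree} each such $\mathcal{S}(A)$ consists solely of $1$, prime powers, and semiprimes $pq$ with $p\in\pz(\sqrt[3]{n},\sqrt{n})$ and $q\in\pz(\sqrt[3]{n},n)$. Because shifting replaces the multiples of each prime $p$ by its $\min\{3,\lfloor\log_p n\rfloor\}$ smallest powers, the prime-power content of any shifted maximum shelf is exactly $\{p^i\le n:i\in\{1,2,3\}\}$; together with $\{1\}$ this yields a fixed contribution of $1+\pi(n)+\pi(\sqrt{n})+\pi(\sqrt[3]{n})$ shared by $T$ and by $\mathcal{S}(A)$. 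Maximality therefore reduces to the claim that $T$ carries at least as many semiprimes as $\mathcal{S}(A)$ does, i.e.\ that the construction realises the largest possible number of admissible products.

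This counting is the core of the proof. I would split the admissible semiprimes into those of type (a), with $q\in\pz(\sqrt{n},n)$, and those of type (b), with both prime factors in $P:=\pz(\sqrt[3]{n},\sqrt{n})$, and record the two degree constraints imposed by $o_p\le 3$: each $p\in P$ lies in at most one product (as $p,p^2$ already account for two multiples), while each $q\in\pz(\sqrt{n},n)$ lies in at most two. If $t_a,t_b$ denote the numbers of products of each type, then the type-(a) products form a matching of $G(n)$, whence $t_a\le|M(n)|$ because $M(n)$ is maximum, while counting the occupied primes of $P$ gives $t_a+2t_b\le|P|$. Since the map $t_a\mapsto t_a+\lfloor(|P|-t_a)/2\rfloor$ is non-decreasing, the total $t_a+t_b$ is maximised by taking $t_a$ as large as possible, namely $t_a=|M(n)|$, and then packing the $s=|P|-|M(n)|$ remaining primes of $P$ into $\lfloor s/2\rfloor$ pairings, which is exactly the recipe defining $T$. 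The main obstacle I anticipate is precisely this optimisation step: one has to argue that spending a prime of $P$ on a cheap type-(a) product is never globally worse than reserving two primes of $P$ for a type-(b) product, so that a maximum matching followed by greedy pairing of the free vertices $F(n)$ is optimal and not merely locally so. Everything else is routine bookkeeping with the prime-counting function.
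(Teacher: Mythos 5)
Your proof is correct and takes essentially the same approach as the paper, which gives no separate argument for this theorem beyond ``combining'' Lemma~\ref{le:shiftingPreservesSize}, Lemma~\ref{le:StructureShiftedThree} and the maximality of $M(n)$ --- precisely the reduction you carry out. In fact your explicit counting ($t_a\le|M(n)|$, $t_a+2t_b\le|P|$, and the monotonicity of $t_a\mapsto t_a+\lfloor(|P|-t_a)/2\rfloor$) already resolves rigorously the optimisation step the paper leaves implicit, so the ``main obstacle'' you flag at the end is in fact settled by your own monotonicity argument.
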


By carefully examining the changes of $M(n)$ when it is transitioned into $M(n+1)$ one gets

\begin{proposition}
For $n\in\N$ we have
\[
\mathfrak{cn}_k(n+1)\in\begin{cases}
\{\mathfrak{cn}_k(n)+1\}&\text{if }n+1\in\pz,\\
\{\mathfrak{cn}_k(n),\mathfrak{cn}_k(n)+1\}&\text{if }\sqrt[3]{n+1}\in\pz\text{ or }n+1=pq\text{ for }p,q\in\pz\\
&\text{ with }\sqrt[3]{n}<q\leq\sqrt{n}<p,\\
\{\mathfrak{cn}_k(n),\mathfrak{cn}_k(n)+1,\mathfrak{cn}_k(n)+2\}&\text{if }\sqrt{n+1}\in\pz,\\
\{\mathfrak{cn}_k(n)\}&else.
\end{cases}
\]
\end{proposition}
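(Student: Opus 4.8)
The plan is to read off a closed form for $\mathfrak{cn}_3(n)$ from the preceding Theorem and then track, vertex by vertex and edge by edge, how the bipartite graph $G(n)$ and its matching $M(n)$ mutate as $n$ becomes $n+1$ (the proposition is stated for general $k$ but, given the cube/square roots and the construction of this section, it concerns $k=3$). Counting the elements of the maximum $(n,3)$-shelf produced by the Theorem gives
\[
\mathfrak{cn}_3(n)=1+\pi(n)+\pi(\sqrt n)+\pi(\sqrt[3]n)+|M(n)|+\left\lfloor\tfrac{|F(n)|}{2}\right\rfloor .
\]
Writing $\ell(n):=\pi(\sqrt n)-\pi(\sqrt[3]n)$ for the number of left vertices of $G(n)$ and $\nu(n):=|M(n)|$ for its matching number (maximum by the previous Lemma), one has $|F(n)|=\ell(n)-\nu(n)$, so the last two summands collapse via $x+\lfloor(y-x)/2\rfloor=\lfloor(x+y)/2\rfloor$ into
\[
\mathfrak{cn}_3(n)=1+\pi(n)+\pi(\sqrt n)+\pi(\sqrt[3]n)+\left\lfloor\tfrac{\nu(n)+\ell(n)}{2}\right\rfloor .
\]
The whole problem then reduces to controlling the increments $\pi(n+1)-\pi(n)$, $\pi(\sqrt{n+1})-\pi(\sqrt n)$, $\pi(\sqrt[3]{n+1})-\pi(\sqrt[3]n)$ together with $\delta_\nu:=\nu(n+1)-\nu(n)$ and $\delta_\ell:=\ell(n+1)-\ell(n)$. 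I would also record at the outset that $CS_3(n)\subseteq CS_3(n+1)$, so $\mathfrak{cn}_3$ is nondecreasing; this supplies the lower bound $0$ in every case for free.

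The three prime-counting increments are elementary and mutually exclusive: $\pi(n+1)-\pi(n)=1$ exactly when $n+1\in\pz$, $\pi(\sqrt{n+1})-\pi(\sqrt n)=1$ exactly when $n+1=p^2$ (i.e.\ $\sqrt{n+1}\in\pz$), and $\pi(\sqrt[3]{n+1})-\pi(\sqrt[3]n)=1$ exactly when $n+1=p^3$, each being $0$ otherwise. The heart of the argument is a dictionary for the single structural change turning $G(n)$ into $G(n+1)$: passing to $n+1$ only raises the product threshold from $p'q'\le n$ to $p'q'\le n+1$ and shifts the half-open endpoints $\pz(\sqrt[3]{\cdot},\sqrt{\cdot})$, $\pz(\sqrt{\cdot},\cdot)$. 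I would establish four local pictures. First, if $n+1\in\pz$ it enters the right class as an isolated vertex (any neighbour $p'$ would need $p'\le1$), so $\delta_\nu=\delta_\ell=0$. Second, if $n+1=p^3$ then $p=\sqrt[3]{n+1}$ drops out of the left class onto the new open lower endpoint, no edge is created, $\delta_\ell=-1$, and $\delta_\nu\in\{-1,0\}$ since deleting one vertex lowers the matching number by at most one. Third, if $n+1=pq$ is a semiprime of the stated shape, then precisely the two edges joining the left vertex $q$ to the two right copies of $p$ appear and nothing else changes, so $\delta_\ell=0$ and $\delta_\nu\in\{0,1\}$ (the new edges are all incident to the single vertex $q$). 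Fourth, if $n+1=p^2$ then $p=\sqrt{n+1}$ migrates from the right class (two copies) to the left class (one copy), and crucially the new left vertex is \emph{isolated}, since any right neighbour $q'$ would satisfy $q'\le(n+1)/p=p$ while every right vertex exceeds $p$; thus $\delta_\ell=+1$ and, as two right vertices are deleted, $\delta_\nu\in\{-2,-1,0\}$.

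It then remains to feed each picture into the floor formula using $\lfloor(X+d)/2\rfloor-\lfloor X/2\rfloor$. In the \emph{else} case $G(n+1)=G(n)$ verbatim and all prime-power increments vanish, giving change $0$. For $n+1\in\pz$ the change is exactly the prime increment $1$. For $n+1=p^3$ (increment $1$, $\delta_\ell=-1$, $\delta_\nu\in\{-1,0\}$) and for the semiprime case (increment $0$, $\delta_\ell=0$, $\delta_\nu\in\{0,1\}$) a short parity computation gives a total change in $\{0,1\}$. For $n+1=p^2$ (increment $1$, $\delta_\ell=1$, $\delta_\nu\in\{-2,-1,0\}$) the same computation gives a change in $\{0,1,2\}$. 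Together with monotonicity these reproduce exactly the four claimed value sets.

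The step I expect to cost the most care is the ``nothing else changes'' clause of the dictionary, namely proving that a genuinely new edge appears if and only if $n+1$ is a semiprime of the prescribed shape. This is what decides the \emph{else} case: I must rule out new edges from every factorization $n+1=p'q'$ into two primes by showing that if such a semiprime is not of the stated shape, then its smaller prime factor lies below $\sqrt[3]n$ and is therefore not a left vertex, so no edge is drawn. A secondary subtlety is the endpoint bookkeeping — deciding on which side of the threshold the critical prime $\sqrt{n+1}$ or $\sqrt[3]{n+1}$ falls — together with the isolatedness claim in the square case; both are short once the convention $\pz(x,y)=\pz\cap(x,y]$ is applied consistently, and the smallest square and cube instances should be checked by hand to confirm the boundary behaviour of the inequalities $(n+1)^{1/3}\le\sqrt n$ and $\sqrt n<(n+1)^{1/2}$.
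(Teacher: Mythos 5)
Your proposal is correct and follows exactly the route the paper intends: the paper offers no written-out proof beyond the remark that the proposition follows ``by carefully examining the changes of $M(n)$ when it is transitioned into $M(n+1)$'', and your case analysis (closed formula $\mathfrak{cn}_3(n)=1+\pi(n)+\pi(\sqrt n)+\pi(\sqrt[3]n)+\lfloor(\nu(n)+\ell(n))/2\rfloor$, then tracking the four local mutations of $G(n)$ --- new isolated right vertex for $n+1\in\pz$, left-vertex deletion for $n+1=p^3$, two new edges at a single left vertex for the semiprime case, right-to-left migration of an isolated vertex for $n+1=p^2$, and the edge-threshold argument showing nothing changes otherwise) is a faithful and correct elaboration of precisely that sketch. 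The parity bookkeeping and the boundary checks you flag (e.g.\ $q^3=n+1$ or $p^2=n+1$ forcing the excluded degenerate factorizations) all go through as you describe.
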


The true value in the second and the third case depends on the number of neighbours of a given prime $p$ in $G(n)$ and on how the number of primes lying in $F(n)$. Since both quantities are difficult to determine exactly and also vary largely, it does not seem possible to get good approximations for $\mathfrak{cn}_k(n)$ from the presented method.

\section{Construction ideas for $k\geq 4$}

Adjusting the previous method will in general not yield a maximum $(n,k)$-shelf for $k\geq 4$.

On the one hand this is due to the fact that the corresponding bipartite graph also contains elements from $\pz(\sqrt[3]{n},\sqrt{n})$ more than once. For example in case of $n=1202$ and $k=4$ this would cause the constructed matching not to be maximum ($31,37,23,47,11,59$ would be an augmenting path).

On the other hand elements from $\pz(\sqrt[k]{n},\sqrt[3]{n})$ are still usable, since their number of multiples would be smaller than $k$. This second problem can be dealt with using an idea from \cite{dewiljes:2017b}. Repeatedly applying the exchange operation $\mathcal{E}_{B,C}(A):=(A\setminus B)\cup C$, where $A,B,C\subseteq[n]$, and
\begin{lemmaDW1}[de Wiljes \cite{dewiljes:2017b}]
Let $k,n\in\N$ and $A,B,C\subseteq[n]$ with $o_p(C)\leq k-o_p(A\setminus B)$ for all $p\in\pz$. Then $\mathcal{E}_{B,C}(A)\in CS_k(n)$.
\end{lemmaDW1}
as well as
\begin{lemmaDW2}[de Wiljes \cite{dewiljes:2017b}]
Let $k\in\N_{\geq 4}$, $l\in\N$ and $n\in\N$ with $(k-1)\pi(\sqrt[3]{n^2})-(2k-1)\pi(\sqrt{n})>kl-k$. Then for every $A\in CS_k(n)$ there are $l$ primes $q_1,q_2,\dots,q_l\in\pz(\sqrt{n},\sqrt[3]{n^2})$ with $o_{q_i}(A)\leq k-1$ for every $i\in[l]$.
\end{lemmaDW2}
from that paper yields:

\begin{theorem}\label{th:constructionForLargeK}
Let $k\in\N_{\geq 4}$ and $n\in\N$ with
\[
(k-1)\pi(\sqrt[3]{n^2})-(2k-1)\pi(\sqrt{n})>k^2-k.
\]
Then there exists some $A\in CS_k^{max}(n)$ of the form
\[
A=\{1\}\cup\pz(\sqrt[3]{n},n)\cup\{p^2:p\in\pz(\sqrt[3]{n},\sqrt{n})\}\cup X\cup Y\cup Z,
\]
where
\begin{itemize}
\item $o_p(X)=k$ holds for all $p\in\pz(1,\sqrt[3]{n})$,
\item every element from $X$ is of the form $pq$ with $p\in\pz(1,\sqrt[3]{n})$ and $q\in\pz(\sqrt{n},n)$,
\item every element from $Y$ is of the form $pq$ with $p\in\pz(\sqrt[3]{n},\sqrt{n})$ and $q\in\pz(\sqrt{n},n)$,
\item every element from $Z$ is a product of two distinct elements from $p\in\pz(\sqrt[3]{n},\sqrt{n})$.
\end{itemize}
\end{theorem}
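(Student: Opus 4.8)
The plan is to start from an arbitrary maximum shelf and reshape it, through a finite sequence of size-preserving exchanges, into the stated form; as each exchange keeps the cardinality fixed, the reshaped set stays a maximum shelf. First I would secure the prime-power skeleton. Every medium prime $p\in\pz(\sqrt[3]{n},\sqrt n)$ has $\lfloor\log_p n\rfloor=2$ and every large prime has $\lfloor\log_p n\rfloor=1$, so the Lemma giving $e_i=\lfloor\log_{p_i}n\rfloor$ for maximum shelves shows that after shifting each such prime contributes precisely its available powers; together with Lemma~\ref{le:shiftingPreservesSize} this places $\pz(\sqrt[3]{n},n)$ and $\{p^2:p\in\pz(\sqrt[3]{n},\sqrt n)\}$ in the shelf without loss of maximality. (If one avoids shifting, the same inclusions follow from maximality directly, a missing $p$ or $p^2$ being swapped in against a composite multiple of $p$.) It then remains to rewrite every other element as an admissible product.

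The core of the argument is to force each small prime $p\in\pz(1,\sqrt[3]{n})$ to occur only inside products $pq$ with $q$ large. Any remaining element divisible by such a $p$ that is not of this shape---a power $p^j$, a product with a second small or medium factor, or a product of three or more primes---I would delete and replace by one product $pq$ via the exchange $\mathcal E_{B,C}$ with $|B|=|C|=1$. Choosing $q\in\pz(\sqrt n,\sqrt[3]{n^2})$ ensures $pq<\sqrt[3]{n}\cdot\sqrt[3]{n^2}=n$, so that $pq$ is a genuine vertex, and choosing it with $o_q\le k-1$ lets Lemma~DWa certify that the outcome lies in $CS_k(n)$; since $|B|=|C|$ the size, and hence maximality, is preserved. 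Lemma~DWb is exactly the device that produces such a $q$: under the hypothesis it yields primes of $\pz(\sqrt n,\sqrt[3]{n^2})$ with $o_q\le k-1$, and I would reapply it to the current shelf after every exchange so the guarantee tracks the shelf as it changes.

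Once no inadmissible element survives, each remaining product is at most $n$ and so cannot have two factors exceeding $\sqrt n$; the products therefore fall automatically into $Y$ (medium times large) and $Z$ (two distinct medium primes), while those with a small factor constitute $X$. That $o_p(X)=k$ for every small $p$ is then forced by maximality: were $o_p<k$, Lemma~DWb would again furnish a large $q$ with $o_q\le k-1$ and a suitable $pq\le n$ with $pq\notin A$, whence $A\cup\{pq\}\in CS_k(n)$ by Lemma~DWa would be strictly larger---a contradiction. The numerical hypothesis $(k-1)\pi(\sqrt[3]{n^2})-(2k-1)\pi(\sqrt n)>k^2-k$ is precisely the instance $kl-k=k^2-k$, i.e.\ $l=k$, of Lemma~DWb that legitimises all of these appeals.

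The main obstacle I anticipate is the global bookkeeping of the iteration. Turning a power $p^j$ into a product $pq$ raises the total large-prime usage by one, so one must verify that the capacity certified by Lemma~DWb is never exhausted before every small prime has been converted and filled to $o_p=k$---that the single margin $k^2-k$ suffices to drive the whole reorganization rather than a lone exchange. Pinning this down, namely tracking how the admissible $q$ with $o_q\le k-1$ are consumed and replenished as the shelf evolves and confirming the count remains valid down to the threshold where the inequality barely holds, is the delicate point; granting it, the rest is a routine combination of Lemma~DWa, Lemma~DWb, and the size-preservation of $\mathcal E_{B,C}$.
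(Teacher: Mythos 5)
Your proposal is correct and follows essentially the same route as the paper: start from an arbitrary maximum shelf and reshape it by size-preserving exchanges $\mathcal{E}_{B,C}$, with Lemma DWa certifying membership in $CS_k(n)$ and Lemma DWb with $l=k$ (exactly the stated hypothesis, since $kl-k=k^2-k$) supplying the primes $q\in\pz(\sqrt{n},\sqrt[3]{n^2})$; the only cosmetic difference is that the paper installs the skeleton $\pz(\sqrt[3]{n},n)\cup\{p^2:p\in\pz(\sqrt[3]{n},\sqrt{n})\}$ by two further exchange steps rather than by shifting. The bookkeeping obstacle you flag in your last paragraph is not actually there, for the reason you yourself half-state: the hypothesis of Lemma DWb involves only $k$ and $n$, so it applies afresh to the current shelf after every exchange --- it is a property of every element of $CS_k(n)$, not a budget that can be exhausted --- and $l=k$ suffices locally because, once the offending multiple of a small prime $p$ is removed, $o_p\leq k-1$, so at most $k-1$ of the $k$ candidate products $pq_i$ can already lie in the shelf and an unused admissible candidate always remains; since each exchange strictly decreases the number of elements not of the stated form, the process terminates, and maximality then forces $o_p(X)=k$ exactly as you argue.
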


\begin{proof}
We start with an arbitrary maximum $(n,k)$-shelf $A$. Then we apply the following steps, where we only move to the next step if the current one cannot be applied anymore.
\begin{itemize}
\item If there exist multiples of elements $p$ from $\pz(1,\sqrt[3]{n})$, which are not of the desired form $pq$ with $q\in\pz(\sqrt{n},n)$, they are exchanged using Lemma DWa and Lemma DWb.
\item If there is some $p\in\pz(\sqrt[3]{n},\sqrt{n})$, for which $|\{p,p^2\}\cap A|<2$ holds, we exchange one of its multiples by one of its first two powers using Lemma DWa.
\item If there exists some $p\in\pz(\sqrt{n},n)$, which is not contained in $A$, we replace a suitable element of $A$ by $p$, again by using Lemma DWa. Note that it can never happen that some $pq$ with $q\in\pz(1,\sqrt[3]{n})$ can be exchanged by $p$ (otherwise $A$ would not have maximal cardinality).
\end{itemize}
The resulting set has the desired form.
\end{proof}

We only have to maximize $|Y|+|Z|$ to get a maximum $(n,k)$-shelf, since the elements of $\pz(1,\sqrt[3]{n})$ can be paired with ``free" primes from $p\in\pz(\sqrt{n},n)$ using Lemma DWb without changing given $Y$ and $Z$.

A promising attempt would be to use the max-flow-min-cut theorem for networks (see for example \cite{bondy:2008}). The following graph $G_k(n)=(V,E)$ for $k\in\N_{\geq 4}$ should be considered:
\begin{itemize}
\item $V=\{s\}\cup\pz(\sqrt[3]{n},\sqrt{n})\cup\pz(\sqrt{n},n)\cup\{t\}$, where $s$ is the \textit{source} and $t$ is the \textit{sink}.
\item The graph has edges between $s$ and every element of $\pz(\sqrt[3]{n},\sqrt{n})$, between $t$ and every element of $\pz(\sqrt{n},n)$, and between $p\in\pz(\sqrt[3]{n},\sqrt{n})$ and $q\in\pz(\sqrt{n},n)$ iff $pq\leq n$.
\item The capacity function $c:E\rightarrow\R$ has values
\[
c(\{s,\cdot\})=k-2,\quad c(\{t,\cdot\})=k-1,\quad\text{and}\quad c(\{p,q\})=1.
\]
\end{itemize}
The given choice of the capacity function ensures that a flow in $G_k(n)$ cannot use elements from $\pz(\sqrt[3]{n},\sqrt{n})$ more than $k-2$ times, elements from $\pz(\sqrt{n},n)$ more than $k-1$ times, and products of the form $pq$ with $p\in\pz(\sqrt[3]{n},\sqrt{n})$ and $q\in\pz(\sqrt{n},n)$ more than once.

We now have to find a maximum flow $f$ in $G_k(n)$ while also guaranteeing that the differences of $f$ and the capacity on the edges starting at $s$ yield a degree sequence of a graph or are at least close to one. Then the edges of the form $\{p,q\}$ used by $f$ (meaning where $f$ has value $1$) produce the set $Y$ in Theorem \ref{th:constructionForLargeK} and from the degree sequence we can construct $Z$ (by the same procedure as in the lower bound in \cite{dewiljes:2017a}). The author does not know yet how to solve the ``close to degree sequence" problem.

\end{document}